\newtheorem{thm}{Theorem}
\newtheorem{cor}[thm]{Corollary}
\theoremstyle{definition}
\theoremstyle{remark}
\newtheorem{rmk}[thm]{Remark}
\newtheorem{ex}[thm]{Example}
\newcommand{\C}{\mathbb{C}}
\newcommand{\N}{\mathbb{N}}
\newcommand{\T}{\mathbb{T}}
\newcommand{\Z}{\mathbb{Z}}
\newcommand{\subr}{\operatorname{r}}
\newcommand{\id}{\operatorname{id}}
\newcommand{\Aut}{\operatorname{Aut}}
\title[On graded $C^*$-algebras]{\boldmath{On graded $C^*$-algebras}}
\author[Iain Raeburn]{Iain Raeburn}
\address{Iain Raeburn\\ Department of Mathematics and Statistics\\University of Otago\\PO Box 56\\Dunedin 9054\\New Zealand; orcid id 0000-0002-0368-5979}
\address{Current address: Department of Mathematics and Statistics\\Victoria University of Wellington\\PO Box 600\\Wellington 6140\\New Zealand}
\email{iraeburn@maths.otago.ac.nz; iain.raeburn@vuw.ac.nz}
\subjclass[2010]{46L05}
\date{\today}
\begin{document}

\begin{abstract}
We show that every topological grading of a $C^*$-algebra by a discrete abelian group is implemented by an action of the compact dual group.
\end{abstract}

\maketitle

Suppose that $A$ is an algebra over a field $K$ and $G$ is a group. We say that $A$ is \emph{$G$-graded} if there are linear subspaces $\{A_g:g\in G\}$ such that $A$ is the direct sum of the $A_g$ and $a\in A_g$, $b\in A_h$ imply $ab\in A_{gh}$.  Then each element of $A$ has a unique decomposition as a sum $a=\sum_{g\in G}a_g$ of homogeneous components $a_g\in A_g$ (and all but finitely many $a_g=0$). We have known since the first paper on the subject that the Leavitt path algebras $L_K(E)$ of a directed graph $E$ are $\Z$-graded  \cite[Lemma~1.7]{AAP}.

For graph $C^*$-algebras, the field $K$ is always $\C$. The graph algebra $C^*(E)$ is not graded in the algebraic sense, and the role of the grading in the general  theory is played by a \emph{gauge action} $\gamma$ of the circle $\T=\{z\in \C:|z|=1\}$ on $C^*(E)$. We can use this action to define homogeneous components of $a\in C^*(E)$ by
\[
a_n:=\int_0^1 \gamma_{e^{2\pi it}}(a)e^{-2\pi int}\,dt\quad\text{for $n\in \Z$.}
\]
But $\{n:a_n\not=0\}$ can be infinite, and then the relationship between $a$ and the sequence $\{a_n\}$ is well-known to be analytically subtle (see \cite{Rmalaga}, for example).

In the recent book \cite{AAMS}, the authors show that $C^*(E)$ is always graded in a weaker sense introduced by Exel \cite{E}. He said that a $C^*$-algebra $A$ is \emph{$G$-graded} if there is a family $\{A_g:g\in G\}$ of linearly independent closed subspaces such that $a\in A_g$ and $b\in A_h$ imply $ab\in A_{gh}$ and $a^*\in A_{g^{-1}}$, and such that $A$ is the norm-closure of $\bigoplus_{g\in G}A_g$. It is proved in \cite[Proposition~5.2.11]{AAMS} that every graph algebra $C^*(E)$ is $\Z$-graded in Exel's sense. 

In fact the result in \cite{AAMS} says rather more than this. Exel also introduced a stronger notion: he said that a $G$-graded $C^*$-algebra $A$ is \emph{topologically graded} if there is a bounded linear map $F:A\to A$ which is the identity on $A_e$ and vanishes on every $A_g$ with $g\not=e$ \cite[\S19]{Ebook}. (His original Definition~3.4 in \cite{E} looks a little stronger, since it asserts that $F$ is a conditional expectation. But it follows from \cite[Theorem~3.3]{E} or \cite[Theorem~19.1]{Ebook} that this extra requirement is automatic.) The extra information in 
\cite[Proposition~5.2.11]{AAMS} implies that the $\Z$-grading of $C^*(E)$ is topological, and this information is obtained using the gauge action of $\T$.
 
Here we revisit gradings of $C^*$-algebras. We work with topological gradings by an abelian group $G$, because that is enough to cover also the $\Z^k$-graded graph algebras of higher-rank graphs and their twisted analogues. We show that every topological $G$-grading of a $C^*$-algebra $A$ is implemented by a natural action of the Pontryagin dual $\widehat G$, which in the case of a graph algebra $C^*(E)$ is the usual gauge action of $\T=\widehat \Z$. We then use recent results on the $C^*$-algebras of Fell bundles \cite{R2} to reconstruct an arbitrary element of a topologically $\Z^k$-graded algebra from its graded components. 
\medskip

We begin by discussing a couple of illustrative examples from Exel's book \cite{Ebook}. 

\begin{ex}\label{circle}
We consider the graph $E$ with one vertex $v$ and one loop $e$. The graph algebra $C^*(E)$ has identity $P_v$ and is generated by the unitary element $S_e$. Because graph algebras are universal for Cuntz-Krieger families, this one is universal for $C^*$-algebras generated by a unitary element, and hence $(C^*(E),S_e)$ is $(C(\T),z)$. The gauge action of $\T$ is implemented by rotations, and for $n\in \Z$, the graded components $C^*(\T)_n$ are the scalar multiples of the polynomials $z^n$. Since these polynomials form an orthonormal basis for $L^2(\T)$ and $C(\T)\subset L^2(\T)$, the Fourier coefficients
\[
\widehat f(n)=\int_{\T} f(z)z^{-n}\,dz:=\int_0^1 f(e^{2\pi it})e^{-2\pi int}\,dt
\]
of $f\in C(\T)$ determine $f$ uniquely: $\widehat f(n)=\widehat g(n)$ for all $n$ implies $f=g$ in $C(\T)$. It has long been known that the Fourier series of $f$ need not converge in the norm of the ambient $C^*$-algebra $C(\T)$, but a classical  theorem of F\'ejer (1900) tells us that the C\'esaro means of the partial sums of the Fourier series converge uniformly to $f$ on $\T$. Thus we can recover $f$ from its Fourier coefficients, and, provided we remember that this recovery process is not the obvious one, we can view $C(\T)$ as a $\Z$-graded algebra.
\end{ex}

\begin{ex}\label{Exelex}
(Motivated by the discussion following Proposition~19.3 in \cite{Ebook}.) We take a closed subset $X$ of $\T$ which is infinite but not all of of $\T$, and consider $C(X)$. We write $e_n$ for the polynomial $z^n$, viewed as an element of $C(\T)$. Then, as observed in \cite{Ebook}, the subspaces
\[
C(X)_n:=\{ce_n|_X:c\in \C\}
\]
are linearly independent (because $X$ is infinite). Because the $e_n$ span a dense subspace of $C(\T)$ and $f\mapsto f|_X$ is a surjection of $C(\T)$ onto $C(X)$, the direct sum $\bigoplus_{n\in \Z}C(X)_n$ is dense in $C(X)$. Thus the $C(X)_n$ give a $\Z$-grading of $C(X)$ in the sense of \cite{E,Ebook}.

Since $X$ is a proper closed subset of $\T$, and the map $f\mapsto f|_X$ has infinite-dimensional kernel isomorphic to $C_0(\T\setminus X)$, each $f\in C(X)$ has many extensions $g$ in $C(\T)$. Each such extension $g$ has a canonical sequence of homogeneous components $\widehat g(n)e_n$, and the C\'esaro means for this sequence converge uniformly in $C(\T)$ to $g$. The restrictions of the C\'esaro means to $X$ converge uniformly in $C(X)$ to $g|_X=f$. But different extensions of $f$ have different Fourier coefficients, and hence there is no canonical choice of homogeneous components for $f$ in $C(X)$.
\end{ex}

Example~\ref{Exelex} shows that a $\Z$-graded $C^*$-algebra need not have the properties one would expect of a grading. So Exel also considered his stronger notion of ``topological grading'', in which the bounded linear map $F:A\to A_e$ which gives a continuous choice of homogeneous component $a_e:=F(a)$. In the discussion in \cite[\S19]{Ebook}, he proves that the algebra $C(X)$ in Example~\ref{Exelex} is not topologically graded. Our main result says that for a topologically $G$-graded $C^*$-algebra, the map $F$ is implemented by integration of a continuous action of the compact dual group $\widehat G$ with respect to the normalised Haar measure.

\begin{thm}\label{intform}
Suppose that $G$ is an abelian group and that $A$ is a $C^*$-algebra which is topologically $G$-graded in Exel's sense. Then there is a strongly continuous action $\alpha$ of $\widehat G$ on $A$ such that $\alpha_\gamma(a)=\gamma(g)a$ for $a\in A_g$, and then
\begin{equation}\label{expect=int}
F(a)=\int_{\widehat G} \alpha_\gamma(a)\,d\gamma\quad\text{for all $a\in A$.}
\end{equation}
\end{thm}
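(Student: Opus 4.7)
The plan is to construct $\alpha_\gamma$ first as a $*$-algebra automorphism of the dense $*$-subalgebra $A_0 := \bigoplus_{g \in G} A_g$, then extend to all of $A$, verify strong continuity, and finally check the integral formula. On $A_0$, the obvious definition
\[
\alpha_\gamma^0\Bigl(\sum_g a_g\Bigr) := \sum_g \gamma(g)\, a_g
\]
gives a $*$-algebra automorphism with inverse $\alpha_{\gamma^{-1}}^0$, since $\gamma:G\to\T$ is a group homomorphism and the graded structure satisfies $A_gA_h\subseteq A_{gh}$ and $A_g^*=A_{g^{-1}}$.

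The main difficulty --- and the reason topological grading is indispensable, as Example~\ref{Exelex} illustrates --- is to show that $\alpha_\gamma^0$ extends to an isometric $*$-automorphism $\alpha_\gamma$ of $A$. The strategy is to recognise $\BB:=\{A_g\}$, with operations inherited from $A$, as a Fell bundle over $G$, and then invoke Exel's theory of topologically graded $C^*$-algebras from \cite{Ebook}, which identifies $A$ with the reduced cross-sectional algebra $C^*_r(\BB)$ and $F$ with its canonical faithful conditional expectation. The dual action of $\widehat G$ on $C^*_r(\BB)$, which on finitely supported sections acts by exactly the formula above, then transports back to give $\alpha_\gamma$ on $A$. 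A more hands-on alternative proceeds via the GNS representation $\pi_F$ associated to $F$: since $F$ is faithful, $\pi_F$ is faithful, and the computation $F(\alpha_\gamma^0(a)^*\alpha_\gamma^0(a)) = \sum_g a_g^*a_g = F(a^*a)$ for $a=\sum_g a_g\in A_0$ shows $\alpha_\gamma^0$ lifts to a unitary $U_\gamma$ on the GNS Hilbert space, implementing $\alpha_\gamma^0$ by conjugation and so making it isometric.

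Strong continuity follows easily. Each evaluation $\gamma\mapsto\gamma(g)$ is continuous on $\widehat G$ by definition, so $\gamma\mapsto\alpha_\gamma(a)$ is norm-continuous for every $a\in A_0$; since $A_0$ is dense in $A$ and each $\alpha_\gamma$ is isometric, a standard $\varepsilon/3$ argument extends continuity to all of $A$. Finally, for the integral formula both $F$ and $a\mapsto\int_{\widehat G}\alpha_\gamma(a)\,d\gamma$ are bounded linear maps on $A$, so it suffices to check they agree on each $A_g$. For $a_g\in A_g$, the integral factors as $a_g\int_{\widehat G}\gamma(g)\,d\gamma$, which by orthogonality of characters on the compact abelian group $\widehat G$ equals $a_g$ when $g=e$ and $0$ otherwise --- matching $F(a_g)$ in both cases.
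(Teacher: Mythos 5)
Your primary route is essentially the paper's: form the Fell bundle $\BB=\{A_g\}$, use Exel's structure theory for topologically graded algebras to identify $A$ with the cross-sectional $C^*$-algebra of $\BB$, transport the dual action of $\widehat G$, prove strong continuity by an $\varepsilon/3$ argument off the dense subalgebra, and verify \eqref{expect=int} fibre by fibre using orthogonality of characters. One point you gloss over deserves emphasis: Exel's Theorem~19.5 for a general group gives only surjections $C^*(\BB)\to A\to C^*_{\subr}(\BB)$ whose composite is the regular representation, and in general $A$ sits strictly between the two; the identification of $A$ with $C^*_{\subr}(\BB)$ (equivalently with $C^*(\BB)$) that you invoke is \emph{not} part of the general theory. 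It holds here because Fell bundles over abelian groups are amenable, so the regular representation is injective and both surjections are isomorphisms. This is precisely where the hypothesis that $G$ is abelian enters, and it should be said explicitly.

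Your ``more hands-on alternative'' via the GNS representation of $F$ is circular as stated. Faithfulness of $F$ is not among the hypotheses: a topological grading only supplies a bounded linear map that is the identity on $A_e$ and kills the other fibres (even the fact that $F$ is a conditional expectation is a theorem of Exel, not part of the definition). In the paper, faithfulness of $F$ is deduced as a \emph{corollary} of Theorem~\ref{intform}, by comparing $F$ with the averaging expectation over $\widehat G$, and that deduction itself rests on the identification $A\cong C^*_{\subr}(\BB)$ --- that is, on the first route. Faithfulness genuinely can fail without amenability: the full group algebra $C^*(\mathbb{F}_2)$ with fibres $\C u_g$ is topologically graded over $\mathbb{F}_2$, yet its canonical map $F$ kills the nonzero kernel of the regular representation. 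Your computation $F(\alpha^0_\gamma(a)^*\alpha^0_\gamma(a))=F(a^*a)$ is correct and does yield a unitary on $L^2(A,F)$ intertwining $\pi_F$ and $\pi_F\circ\alpha^0_\gamma$, but without faithfulness of $F$ (hence injectivity of $\pi_F$) you cannot conclude that $\alpha^0_\gamma$ is isometric on $A$. The alternative should therefore be dropped, or presented as a consequence of, rather than a substitute for, the Fell-bundle argument.
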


The subspaces $\{A_g:g\in G\}$ in the  $G$-grading form a \emph{Fell bundle} $B$ over $G$. There is an extensive theory of Fell bundles, orginally developed by Fell (he called them \emph{$C^*$-algebraic bundles} \cite{FD}), and revisited by several authors in the 1990s. We shall lean heavily on results of Exel \cite{E}, as presented in his recent monograph \cite{Ebook}. 

Each Fell bundle $B$ over a (discrete) group $G$ has an enveloping $C^*$-algebra $C^*(B)$ that is universal for a class of Hilbert-space representations, consisting of linear maps $\pi_g:A_g\to B(H)$ such that  $\pi_g(a)\pi_h(b)=\pi_{gh}(ab)$, $\pi_g(a)^*=\pi_{g^{-1}}(a^*)$, and such that $\pi_e$ is a nondegenerate representation of $A_e$. There is also a reduced $C^*$-algebra $C^*_{\subr}(B)$ which is generated by a regular representation \cite[\S17]{Ebook}. Because we are interested in Fell bundles over abelian groups, all our Fell bundles are amenable in Exel's sense \cite[Theorem~20.7]{Ebook}, and $C^*(B)=C_{\subr}^*(B)$.

\begin{ex}
A $G$-graded algebra can be quite different from the $C^*$-algebra of its Fell bundle. To see this, consider the Fell bundles $B_1$ and $B_2$ over $\Z$ associated to the gradings of $C(\T)$ in Example~\ref{circle} and $C(X)$ in Example~\ref{Exelex}. The maps $ce_n\mapsto ce_n|_X$ are Banach-space isomorphisms of the fibres $B_{1,n}$ onto the fibres $B_{2,n}$ (both are one-dimensional) and respect the Fell-bundle structure. Since $C(\T)$ is topologically graded (on any graph algebra there is a map $a\mapsto a_0$ defined by averaging over the gauge action), we have $C^*(B_1)=C(\T)$. Thus we also have $C^*(B_2)=C(\T)$.
\end{ex}

\begin{proof}[Proof of Theorem~\ref{intform}]
Because $A$ is topologically graded there is a bounded linear map $F:A\to A$ such that $f(a)=a$ for $a\in A_e$ and $f(a)=0$ for $a\in A_g$ with $g\not=e$. Let $B$ be the corresponding Fell bundle over $G$ with fibres $A_g$. We deduce from \cite[Theorem~19.5]{Ebook} that there are  surjections $\phi$ of $C^*(B)$ onto $A$ and $\psi$ of $A$ onto the reduced algebra $C^*_{\subr}(B)$ such that $\psi \circ\phi$ is the regular representation of $C^*(B)$. Since the group $G$ is abelian, the Fell bundle is amenable, and the regular representation is an isomorphism. Hence so are $\phi$ and $\psi$.  We deduce that $A$ is generated by a representation $\rho$ of $B$ in $A$, and that $(A,\rho)$ is universal for Hilbert-space representations of $B$.

We now fix $\gamma\in \widehat G$. For each $g\in G$, we define $\alpha_{\gamma,g}:A_g\to A$ by $\alpha_{\gamma,g}(a)=\gamma(g)a$. Since $|\gamma(g)|=1$, $\alpha_{\gamma,g}$ is a linear and isometric embedding of the Banach space $A_g$ in $A$. Since each $A_g$ is a left Hilbert module over $A_e$, the action of $A_e$ on $A_g$ is nondegenerate \cite[Corollary~2.7]{tfb}, and since $A=\overline{\bigoplus_g A_g}$, it follows that any approximate identity for $A_e$ is also an approximate identity for $A$. Thus $\alpha_{\gamma, e}$ is nondegenerate. For $a\in A_g$, $b\in A_h$ we have
\begin{align*}
&\alpha_{\gamma,g}(a)\alpha_{\gamma,h}(b)=(\gamma(g)a)(\gamma(h)b)=\gamma(gh)ab=\alpha_{\gamma,gh}(ab),\text{ and}\\
&u_{\gamma,g}(a)^*=(\gamma(g)a)^*=\overline{\gamma(g)}a^*=\gamma(g^{-1})a^*=\alpha_{\gamma,g^{-1}}(a^*).
\end{align*}
Thus $\alpha_\gamma=\{\alpha_{\gamma, g}\}$ is a representation of the Fell bundle $B$, and the universal property of $A=C^*(B)$ gives a nondegenerate homomorphism $\alpha_\gamma:A\to A$ such that $\alpha_\gamma\circ\rho_g =\alpha_{\gamma,g}$ for $g\in G$. 

For $\gamma,\chi\in \widehat G$ we have $\alpha_\gamma\alpha_\chi=\alpha_{\gamma\chi}$ on each $A_g$, and hence also on $A=\overline{\bigoplus A_g}$. Since $\alpha_1$ is the identity on $A$, it follows that each $\alpha_\gamma$ is an isomorphism, and that $\gamma\mapsto \alpha_\gamma$ is a homomorphism of $\widehat G$ into the automorphism group $\Aut A$. Since convergence in the dual of a discrete abelian group is pointwise convergence, the map $\gamma\mapsto \alpha_\gamma(a)$ is continuous for each $a\in A_g$, and hence by an $\epsilon/3$ argument for all $a\in A=\overline{\bigoplus A_g}$. Thus $\alpha$ is a strongly continuous action of $\widehat G$ on $A$.

Now averaging with respect to normalised Haar measure on $\widehat G$ gives a conditional expectation $E$ of $A$ onto the fixed-point algebra $A^\alpha$ such that
\[
E(a)=\int_{\widehat G} \alpha_\gamma(a)\,d\gamma\quad\text{for all $a\in A$}
\]
(following the discussion for $\widehat G=\T$ in the first few pages of \cite[Chapter~3]{R}, for example). Since $\alpha_\gamma(a)=a$ for $a\in A_e$ and we are using the normalised Haar measure, we have $E(a)=a$ for $a\in A_e$. For $a\in A_g$ with $g\not= e$, we have
\[
E(a)=\int_{\widehat G} \gamma(g)a\,d\gamma=\Big(\int_{\widehat G} \gamma(g)\,d\gamma\Big)a=0.
\]
Thus $E=F$ on $\bigoplus A_g$, and hence by continuity of $E$ and $F$ also on the closure $A$.
\end{proof}

Since $E$ is a faithful conditional expectation, we deduce that $F$ is too. Hence:

\begin{cor}
The bounded linear map $F:A\to A_e$ in Theorem~\ref{intform} is a conditional expectation onto $A_e$, and is faithful in the sense that $F(a^*a)=0$ implies $a=0$.
\end{cor}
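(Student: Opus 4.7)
The plan is to leverage the integral representation \eqref{expect=int} from Theorem~\ref{intform}, which identifies $F$ with the averaging map $E(a)=\int_{\widehat G}\alpha_\gamma(a)\,d\gamma$ coming from the strongly continuous compact-group action $\alpha$ of $\widehat G$ on $A$. Once this identification is in hand, both parts of the corollary reduce to standard properties of the averaging expectation associated to a strongly continuous action of a compact group on a $C^*$-algebra, exactly as described in the references (for example, \cite[Chapter~3]{R}) cited in the proof of the theorem.

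For the conditional-expectation statement, I would first note that $E$ is positive and contractive, satisfies $E\circ E=E$ by translation-invariance of the Haar measure, and enjoys the bimodule property $E(xay)=xE(a)y$ for $x,y$ fixed by $\alpha$ since each $\alpha_\gamma$ is a $*$-homomorphism. Thus $E$ is a conditional expectation onto the fixed-point subalgebra $A^\alpha$. It then remains to identify $A^\alpha$ with $A_e$. The inclusion $A_e\subseteq A^\alpha$ is immediate from $\alpha_\gamma(a)=\gamma(e)a=a$ for $a\in A_e$. Conversely, if $a\in A^\alpha$ then $a=E(a)=F(a)\in A_e$, because $F$ takes values in $A_e$. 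Hence $F=E$ is a conditional expectation onto $A_e$.

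For faithfulness, suppose $F(a^*a)=0$. For an arbitrary state $\phi$ on $A$, the function $\gamma\mapsto\phi\bigl(\alpha_\gamma(a)^*\alpha_\gamma(a)\bigr)$ is continuous, nonnegative, and its integral over the compact group $\widehat G$ equals $\phi(E(a^*a))=\phi(F(a^*a))=0$; hence it vanishes identically. Evaluating at the identity character yields $\phi(a^*a)=0$, and since $\phi$ was arbitrary we conclude $a^*a=0$, so $a=0$.

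The whole argument is essentially bookkeeping on top of the integral formula already established in Theorem~\ref{intform}, so there is no real obstacle; the one small point worth double-checking is the equality $A^\alpha=A_e$, which is needed to assert that the range of $F$ really is $A_e$ and not merely contained in the larger algebra $A^\alpha$.
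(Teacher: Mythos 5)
Your proposal is correct and follows essentially the same route as the paper, which simply observes that $F$ coincides with the averaging map $E$ from Theorem~\ref{intform} and that such an averaging map over a compact group action is a faithful conditional expectation; you have merely filled in the standard details (idempotence, the bimodule property, the identification $A^\alpha=A_e$, and the state-plus-full-support-of-Haar-measure argument for faithfulness) that the paper defers to its references.
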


As we remarked earlier, Exel also proved directly in \cite{E} that $F$ is a conditional expectation.

\begin{rmk}
We have concentrated on Fell bundles over abelian groups because our motivation for looking at this material came from graph algebras, where the appropriate group is $G=\Z^k$. However, the first paragraph of the proof of Theorem~\ref{intform} works for arbitrary amenable groups. Then we can use the universal property of $C^*(B)$ to construct a coaction $\delta:A\to A\otimes C^*(G)$ such that $\delta(a)=a\otimes u_g$ for $a\in A_g$ (see the preliminary material in \cite[Appendix~B]{R2}). The group algebra $C^*(G)$ has a trace $\tau$ characterised by $\tau(1)=1$ and $\tau(u_g)=0$ for $g\not=e$, and hence there is a slice map ${\id}\otimes \tau: A\otimes C^*(G)\to A$. Composing gives a contraction $E:=({\id}\otimes {\tau})\circ \delta$ of $A$ onto 
\[
A^\delta:=\{a\in A:\delta(a)=a\otimes 1\}.
\]
Again we have $A^\delta=A_e$, and $E=F$. 

When $G$ is not amenable, Theorem~19.5 of \cite{Ebook} only tells us that $A$ lies somewhere between $C^*(B)$ and $C^*_{\subr}(B)$. For $A=C^*(B)$, we can use the coaction of the previous paragraph. If $A=C^*_{\subr}(B)$, then we can use spatial arguments to construct a reduced coaction on $A$ (see \cite[Example~2.3(6)]{LPRS} and \cite{Q}). But in general, trying to construct suitable coactions on $A$ seems likely to pose rather delicate problems in nonabelian duality.
\end{rmk}

We now return to the case of an abelian group $G$ and the set-up of Theorem~\ref{intform}. The action $\alpha:\widehat G\to \Aut A$ allows us to construct homogeneous components 
\[
a_g:=\int_{\widehat G} \alpha_\gamma(a)\overline{\gamma(g)}\,d\gamma\quad\text{ for $a\in A$ and $g\in G$.}
\]
For $a\in A_h$, we have 
\[
a_g=\int_{\widehat G} \alpha_h(a)\overline{\gamma(g)}\,d\gamma\int_{\widehat G} a\,d\gamma =\int_{\widehat G}\gamma(hg^{-1})a\,d\gamma=
\begin{cases}a&\text{if $g=h$}\\
0&\text{if $g\not=h$.}
\end{cases}
\]
Comparing this with the formula in \cite[Corollary~19.6]{Ebook}, we see that $a_g$ is the same as Exel's Fourier coefficient $F_g(a)$.

Our motivation came from applications to graph algebras, and hence we are particularly interested in $\Z^k$-graded $C^*$-algebras. Besides the usual graph algebras of directed graphs, for which $k=1$, this includes the higher-rank graph algebras of \cite{KP} and the  twisted higher-rank graph algebras of \cite{KPS, KPS2} (which by \cite[Corollary~4.9]{R2} can be realised as the $C^*$-algebras of Fell bundles over $\Z^k$). For all these graph algebras, the action of the dual $\T^k$ given by Theorem~\ref{intform} is the usual gauge action.

When $G=\Z^k$, the dual is $\T^k$, and Theorem~\ref{intform} gives us an action $\alpha$ of $\T^k$ on $A$. We then define the homogeneous components of $a\in A$ by
\begin{equation}\label{defFc}
a_n=
\int_{\T^k} \alpha_z(a)z^{-n}\,dz\quad\text{for $n\in \Z^k$}.
\end{equation}
Now Proposition~B.1 of \cite{R2} tells us how to recover $a$ from its homogeneous components $a_n$. More precisely:

\begin{cor}
Suppose that a $C^*$-algebra $A$ is $\Z^k$-graded in Exel's sense. Suppose also that there is a bounded linear map $F:A\to A_e$ such that $F|_{A_g}=0$ for $g\not=e$ and $F|_{A_e}$ is the identity. For $a\in A$ and $n\in \Z^k$, define the homogeneous components $a_n$ using \eqref{defFc}. For $m,n\in \Z^k$, we write $m\leq n$ to mean $n-m\in \N^k$, and set
\begin{align*}
s_n(a)&:=\sum_{-n\leq m \leq n}a_m\text{ for $n\in \N^k$, and }\\
 \sigma_N(a)&:=\frac{1}{\textstyle{\prod_{j=1}^k(N_j+1)}}\sum_{0\leq n\leq N} s_n(a)\text{ for $N\in \N^k$.}
\end{align*}
Then $\|\sigma_N(a)-a\|\to 0$ as $N\to \infty$ in $\N^k$.
\end{cor}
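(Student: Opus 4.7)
The plan is to reduce the statement to the multidimensional Fejér summation theorem for strongly continuous actions of $\T^k$ on $C^*$-algebras, which is precisely \cite[Proposition~B.1]{R2}. The bridge from the hypotheses (a topological grading) to that hypothesis (a strongly continuous action) is Theorem~\ref{intform}. So the first step is to apply Theorem~\ref{intform} with $G=\Z^k$ to produce a strongly continuous action $\alpha:\T^k\to\Aut A$ satisfying $\alpha_z(a)=z^n a$ for $a\in A_n$. The definition \eqref{defFc} then makes $a_n$ literally the $n$-th Fourier coefficient of the $A$-valued continuous function $z\mapsto\alpha_z(a)$.

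Once this identification is in place, the sums $s_n(a)$ are exactly the symmetric rectangular Dirichlet partial sums of the Fourier series of $z\mapsto\alpha_z(a)$, and the $\sigma_N(a)$ are their $k$-dimensional Cesàro means. This is precisely the situation addressed by \cite[Proposition~B.1]{R2}, and appealing to it directly delivers the conclusion $\|\sigma_N(a)-a\|\to 0$.

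If one wished to bypass \cite[Proposition~B.1]{R2} and argue from scratch, the natural strategy is to fold the two summation operations into a single integral against the $k$-dimensional Fejér kernel. Starting from $a_m=\int_{\T^k}\alpha_z(a)\overline{z^m}\,dz$, interchanging finite sums with the integral gives $s_n(a)=\int_{\T^k}\alpha_z(a)D_n(z)\,dz$ with $D_n$ the product Dirichlet kernel, and then averaging over $0\leq n\leq N$ yields
\[
\sigma_N(a)=\int_{\T^k}\alpha_z(a)K_N(z)\,dz,
\]
where $K_N(z)=\prod_{j=1}^k K_{N_j}(z_j)$ is a product of one-dimensional Fejér kernels. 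The remaining step is the standard approximate-identity estimate: each $K_{N_j}$ is nonnegative with integral $1$ and concentrates at $1\in\T$ as $N_j\to\infty$, so for fixed $a$ the strong continuity of $\alpha$ gives uniform continuity of $z\mapsto\alpha_z(a)$, and
\[
\|\sigma_N(a)-a\|\leq\int_{\T^k}\|\alpha_z(a)-a\|K_N(z)\,dz\longrightarrow 0.
\]

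There is no serious obstacle. The only analytic input is the approximate-identity property of the product Fejér kernel combined with uniform continuity of the action, which is exactly what \cite[Proposition~B.1]{R2} packages for us; the role of Theorem~\ref{intform} is simply to put us in a position to invoke it.
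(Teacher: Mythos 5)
Your proposal is correct and follows the paper's own route exactly: apply Theorem~\ref{intform} with $G=\Z^k$ to obtain the strongly continuous gauge-type action $\alpha$ of $\T^k$, identify the $a_n$ of \eqref{defFc} as the Fourier coefficients of $z\mapsto\alpha_z(a)$, and then invoke Proposition~B.1 of \cite{R2}, which is precisely how the paper justifies the corollary. Your optional self-contained Fej\'er-kernel argument is a faithful sketch of what that cited proposition packages, so there is nothing to correct.
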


\end{document}